\newcommand{\R}{\mathbb{R}}
\newcommand{\Z}{\mathbb{Z}}
\newcommand{\N}{\mathbb{N}}
\renewcommand{\H}{\mathbb{H}}
\newcommand{\T}{\mathcal{T}}
\newcommand{\virg}[1]{``#1''}
\newcommand{\Span}[1]{\langle\, #1\, \rangle}
\renewcommand{\S}{\mathbb{S}}
\renewcommand{\P}{\mathcal{P}}
\newcommand{\Q}{\mathcal{Q}}
\newcommand{\Th}{^\textrm{th}}
\newtheorem{definition}{Definition}
\newtheorem{lemma}{Lemma}
\newtheorem{example}{Example}
\newtheorem{corol}{Corollary}
\begin{document}
%\title[On the enumeration of the elements of the Fuchsian von Dyck groups]{On the enumeration of the elements of the Fuchsian von Dyck groups}
\title[Vertex-to-edge duality]{On the vertex-to-edge duality between the Cayley  graph and the coset geometry of  von Dyck groups}  
\author[G. Moreno \and M.E. Stypa]%
{Giovanni Moreno* \and Monika Ewa Stypa**}

\newcommand{\acr}{\newline\indent}

\address{\llap{*\,}Mathematical Institute in Opava\acr
                   Silesian University in Opava\acr
                   Na Rybnicku 626/1\acr
                   746 01 Opava\acr
                   CZECH REPUBLIC}

\email{Giovanni.Moreno@math.slu.cz}

\address{\llap{**\,}Department of Mathematics\acr
                    Salerno University\acr
                    Via Ponte Don Melillo\acr
                    84084 Salerno\acr
                    ITALY}
\email{mstypa@unisa.it}

\thanks{The authors would like to thank P. Longobardi and C. Sica for their helpful suggestions, and also   the   referees for carefully reading the manuscript. The first author was supported by the project P201/12/G028 of the    Czech Republic Grant Agency  (GA \v CR). The second author was supported by the doctoral school  of the  University of Salerno.  }

\subjclass{05E18; 20F65; 05C25; 05C15; 52C20; 51E99; 20F05; 22E40}

\keywords{Group actions on combinatorial structures; Geometric group theory; Incidence Geometry; Graphs and abstract algebra; Tilings in $2$ dimensions}

\begin{abstract}
We prove that the Cayley  graph and the coset geometry of the  von Dyck group $D(a,b,c)$ are linked by a vertex-to-edge duality.
\end{abstract}

\maketitle
\tableofcontents

\section*{Introduction}
Let $D(a,b,c):=\Span{x,y\mid x^a=y^b=(xy)^c=1}$ be the von Dyck group, $\Gamma(a,b,c):=\Gamma(D(a,b,c),\{x,y\})$ its Cayley  graph corresponding to the generating set $\{x,y\}$, and $T(a,b,c):=T(D(a,b,c),\{H,K\})$ the rank two coset geometry determined by the subgroups $H:=\Span{x}$ and $K:=\Span{y}$. The latter will be regarded as a two-colored (i.e., bipartite) graph. We plan to  prove the following statements.
\begin{enumerate}
\item The natural action of $D(a,b,c)$   on the graph $T(a,b,c)$ is edge-regular and edge-transitive.\label{s1}
\item There is a  $D(a,b,c)$-equivariant bijection $b$ between $V_{\Gamma(a,b,c)}$, the set of vertices of $\Gamma(a,b,c)$, and $E_{T(a,b,c)}$, the set of edges of $T(a,b,c)$.\label{s2}
\item If $I(a,b,c)\subseteq E_{T(a,b,c)}^2$ denotes the set of incident pairs of edges of $T(a,b,c)$, then there is a map $\psi:I(a,b,c)\longrightarrow H\cup K$ such that the vertices  $d_1$ and $d_2$  of $\Gamma(a,b,c)$ are connected by an   $x$-colored (resp., $y$-colored) oriented edge   if and only if $\psi(b (v_1), b (v_2))=x$ (resp., $=y$). \label{s3}
\item There are uniform tilings $\T(a,b,c)$ and $\T'(a,b,c)$ of a constant curvature surface, such that the 1-skeleton of the former, supplied with a natural (vertex-)coloring, coincides with $T(a,b,c)$ and that of the latter, supplied with a natural edge-coloring and edge-orientation, identifies with $\Gamma(a,b,c)$.\label{s4}
\end{enumerate}
The purpose of (1) is merely to pave the way   for (2). 
The meaning of (2) is that $T(a,b,c)$ and $\Gamma(a,b,c)$ are linked by a vertex-to-edge duality, while (3) implies that all of the information about $\Gamma(a,b,c)$ is already contained in  $T(a,b,c)$, and this is the main result of the paper. Finally, (4) tells precisely that the passage from $T(a,b,c)$ to $\Gamma(a,b,c)$ can be done by means of manipulations of tilings; in particular, both $T(a,b,c)$ and $\Gamma(a,b,c)$ are planar graphs.

\begin{figure}%[H]
\epsfig{file=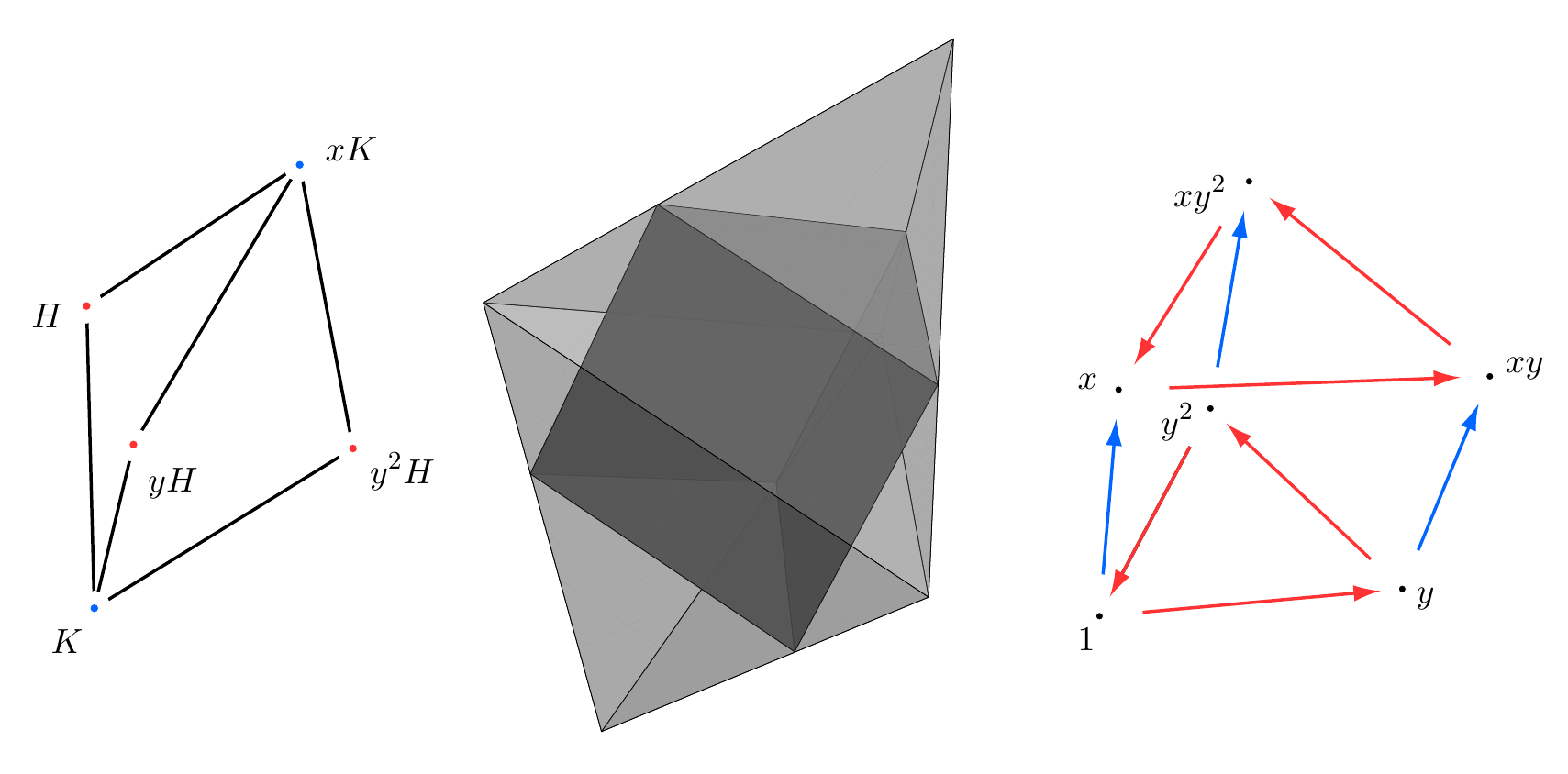,width=\textwidth}\caption{The coset geometry  $T$ of $\Z_6$ with respect to the subgroups $H:=\{0,3\}$ and $K:=\{0,2,4\}$ (left) and its Cayley graph $\Gamma$ with respect to the generators $x=3$ and $y=2$ (right). The    vertex-to-edge duality between $T$ and $\Gamma$ is artistically rendered in the center.  \label{figuragasante}}
\end{figure}

\subsection*{A toy model} Let $\Z_6=\Span{x,y\mid x^2=y^3=[x,y]=1}$ and $H$, $K$ be as above. Then the coset geometry  $T:=T(\Z_6,\{H,K\})$ is the graph appearing on the left of Fig. \ref{figuragasante}. It consists of three red vertices, $H, yH, y^2H$, and two blue ones, $K, xK$; an edge joins the cosets with nonempty intersection (i.e., \emph{incident}). The Cayley  graph $\Gamma:=\Gamma(\Z_6,\{x,y\})$ is displayed\footnote{Notice that,  not to overload the picture, the loops corresponding to the action of $x$ have been replaced by a unique (blue) arrow.}  on the right of   Fig. \ref{figuragasante}, and it consists of six (uncolored) vertices, corresponding to the six elements of $\Z_6$: now there is a blue oriented edge (i.e., a blue arrow) to indicate that the head of the arrow is obtained by acting by $x$ on its tail, and a red one when the action is that of $y$.

The center of Fig. \ref{figuragasante}  illustrates the vertex-to-edge duality this paper is concerned with: if the prism $\Gamma$ is fit into the double pyramid $T$, in such a way that the vertices of the former touch the middle points of the edges of the latter, then the natural $\Z_6$ action on $T$ determines that of $\Gamma$ and vice-versa. This example should clarify the meaning of the statements  \eqref{s2} and  \eqref{s3}.

\subsection*{Structure of the paper} In Section \ref{Sec1} we recall that $D(a,b,c)$ can be defined as the symmetry group of a tiling of a constant curvature surface; in Section \ref{Sec2} we review the construction of the coset geometry $T(a,b,c)$ and prove the statement \eqref{s1}; Section  \ref{Sec3} is dedicated to the Cayley  graph $\Gamma(a,b,c)$ and its duality with $T(a,b,c)$, i.e., the proof of the   statements \eqref{s2} and  \eqref{s3}: this duality is given a geometric interpretation in terms of derived tilings in  Section \ref{SecDuality}, thus proving the last statement \eqref{s4}. The concluding  Section \ref{Sec5} collects some applications and perspectives of the results obtained.

\subsection*{Conventions} If $G$ acts on $S$, $gs$ is the result of the action of $g\in G$ on $s\in S$.

\section{Von Dyck groups as symmetry groups of regular tilings}\label{Sec1}

The groups $D(a,b,c)$ were introduced by W. von Dyck in 1882 \cite{MR1510147}. They can be seen as the \virg{reflections-free} counterparts of the \emph{triangle groups}, thoroughly examined in Coxeter's book  \cite{MR0349820}, and   are tightly related to the theory of regular tilings of constant curvature surfaces. From now on, symbol $\S$ will denote either the sphere, the (real) plane $\R^2$, or the hyperbolic plane $\H$. By a \emph{tiling} $\T$ of $\S$ we mean a $CW$-complex structure on $\S$ whose 2-cells are polygons, such that each pair of them is either disjoint, or shares a whole   side, or   a single vertex: we identify $\T$ with the family of its 2-cells, henceforth called \emph{tiles}, so that $T\in \T$ means \virg{a tile $T$ of $\T$}. The 1-skeleton of  $\T$ is a planar graph embedded into $\S$, which we denote by $\partial\T$: accordingly, its set of vertices $V_{\partial\T}$ is the 0-skeleton of $\T$. We call $\T$   \emph{regular} if there is  a transitive action of an isometry subgroup of $\S$ on it.  When dealing with regular tilings, it is convenient to fix a \emph{basic tile}, say $T_0\in\T$: all the others can be obtained from $T_0$ by means of an isometry.\par

Following the standard terminology, $\T_{a,b,c}$ will denote the regular tiling of $\S$ whose basic triangle   $T_0$ has internal angles equal  to $\frac{\pi}{a}$,  $\frac{\pi}{b}$ and  $\frac{\pi}{c}$: depending on the sum of these values, the surface $\S$ must be elliptic, planar, or hyperbolic in order to accommodate $\T_{a,b,c}$ \cite{MR0349820,MR1393195,MR992195,MR769219}. The {triangle group} $\Delta(a,b,c)$ acts transitively on $\T_{a,b,c}$, whereas $D(a,b,c)$ acts on it by orientation-preserving isometries. We label the vertices of $T_0$ by $A$, $B$ and $O$,  $\frac{\pi}{a}$ (resp., $\frac{\pi}{b}$,  $\frac{\pi}{c}$) being the inner angle at $A$  (resp., $B$, $O$), in such a way that  the sequence of vertices $(A,B,O)$ occurs counterclockwise. Observe that $\partial\T_{a,b,c}$ is a 3-colored graph, with the coloring set given by the vertices of $T_0$: indeed,  there is a unique way to define a $\{A,B,O\}$-valued  coloring map on the whole $\partial\T_{a,b,c}$ which,   restricted to  $\{A,B,O\}$, reduces to the identity of $\{A,B,O\}$. If a vertex $v\in \partial \T_{a,b,c}$ has color $A$, we say that $v$ is of $A$-\emph{type} (and similarly for $B$ and $O$). A tile $T\in  \T_{a,b,c}$  is \emph{positively oriented} if running counterclockwise through its vertices, their colors appear in the same order as in $T_0$: hence,  $D(a,b,c)$  acts transitively on the subset $\T^+_{a,b,c}\subseteq \T_{a,b,c}$ of positively-oriented tiles (see the first row of Fig. \ref{figuragasante2}). This is a consequence of the fact that the generator $x$ (resp., $y$) corresponds to the rotation of $\T_{a,b,c}$ around the vertex $A$ of $T_0$ by an angle $\frac{2\pi}{a}$ (resp., around the vertex $B$ of $T_0$ by an angle $\frac{2\pi}{b}$) and, by composing such rotations, one can move $T_0$ to any other positively-oriented tile. Such a geometric interpretation of $D(a,b,c)$ can make some proofs more transparent.
 
\begin{lemma}\label{lemmaStupido1}
 The intersection $H\cap K$ is trivial.
\end{lemma}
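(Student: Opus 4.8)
The plan is to exploit the geometric realization of $D(a,b,c)$ just described, since there the generators act as rotations, and a common element of $H=\Span{x}$ and $K=\Span{y}$ would have to be a rotation about two distinct centers simultaneously. Concretely, recall that $x$ is the rotation of $\T_{a,b,c}$ about the $A$-vertex of $T_0$ by the angle $\frac{2\pi}{a}$, and $y$ is the rotation about the $B$-vertex by $\frac{2\pi}{b}$. Consequently every element of $H$ is an orientation-preserving isometry of $\S$ fixing the point $A$, and every element of $K$ is one fixing the point $B$.

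Next I would take an arbitrary $g\in H\cap K$ and show it is the identity. Viewing $g$ as an element of $H$ shows that it fixes $A$; viewing it as an element of $K$ shows that it fixes $B$. Since $A$ and $B$ are two distinct vertices of the basic triangle $T_0$, the isometry $g$ fixes two distinct points of $\S$.

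The key fact to invoke is then that an orientation-preserving isometry of $\S$ fixing two distinct, non-antipodal points must be the identity: in the planar and hyperbolic cases a nontrivial rotation has a unique fixed point, while translations and parabolic/hyperbolic elements fix no interior point at all; in the spherical case a nontrivial rotation fixes exactly the two antipodal endpoints of its axis. The only thing left to verify is that $A$ and $B$ are not antipodal in the elliptic case, which is immediate since they are two vertices joined by an edge of the spherical triangle $T_0$ with angles $\frac{\pi}{a}$, $\frac{\pi}{b}$, $\frac{\pi}{c}$. Hence $g$ fixes $\S$ pointwise, so $g=1$ and $H\cap K=\{1\}$.

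I expect the only delicate point to be this spherical subtlety, namely ruling out that the two forced fixed points $A$ and $B$ are compatible with $g$ being a nontrivial rotation; the planar and hyperbolic cases are completely transparent. A purely algebraic route, via a normal-form argument for the abstract structure of $D(a,b,c)$, is in principle available, but it is far less immediate than the one-line geometric observation that no nontrivial rotation can be centered at both $A$ and $B$.
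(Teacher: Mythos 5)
Your proof is correct, and it rests on the same geometric picture as the paper's --- the generators $x$ and $y$ act on $\S$ as rotations about the distinct vertices $A$ and $B$ of the basic tile $T_0$ --- but the decisive step is genuinely different. The paper argues at the level of tiles: if $x^r=y^s\neq 1$, then the single tile $x^rT_0=y^sT_0$ would have to contain both $A$ and $B$, hence be the unique positively-oriented tile on the edge $(A,B)$, namely $T_0$ itself, contradicting the freeness of the $D(a,b,c)$-action on $\T^+_{a,b,c}$. You instead invoke the classification of orientation-preserving isometries of a constant-curvature surface by their fixed-point sets, reducing the lemma to the fact that $A$ and $B$ are distinct and, in the elliptic case, non-antipodal; this makes the spherical subtlety explicit (the paper's combinatorial argument never has to confront it), at the price of importing the isometry classification rather than staying inside the tiling formalism of Section \ref{Sec1}. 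Note that both arguments share the same unstated premise, namely that the geometric realization of $D(a,b,c)$ is faithful: you use it when passing from ``$g$ acts as the identity isometry'' to $g=1$, and the paper uses it when treating $x^rT_0=T_0$ with $x^r\neq 1$ as a contradiction; since that faithfulness is the standing assumption of Section \ref{Sec1}, it is not a gap in either proof.
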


\begin{proof}
 Suppose that $x^{r}=y^s\neq 1$: then the two tiles $x^r T_0$ and $y^s T_0$ must be the same. However, the former is obtained by rotating $T_0$ around $A$, and as such it must contain $A$ itself while the latter must contain $B$. Since an edge of $\partial\T_{a,b,c}$ determines a unique positively-oriented tile, $x^r T_0$ and $ y^s T_0$ must be the same and coincide with $T_0$, which is the unique element of $\T^+_{a,b,c}$ containing the edge $(A,B)$, which is a contradiction.
\end{proof}

\begin{figure}%[H]
\epsfig{file=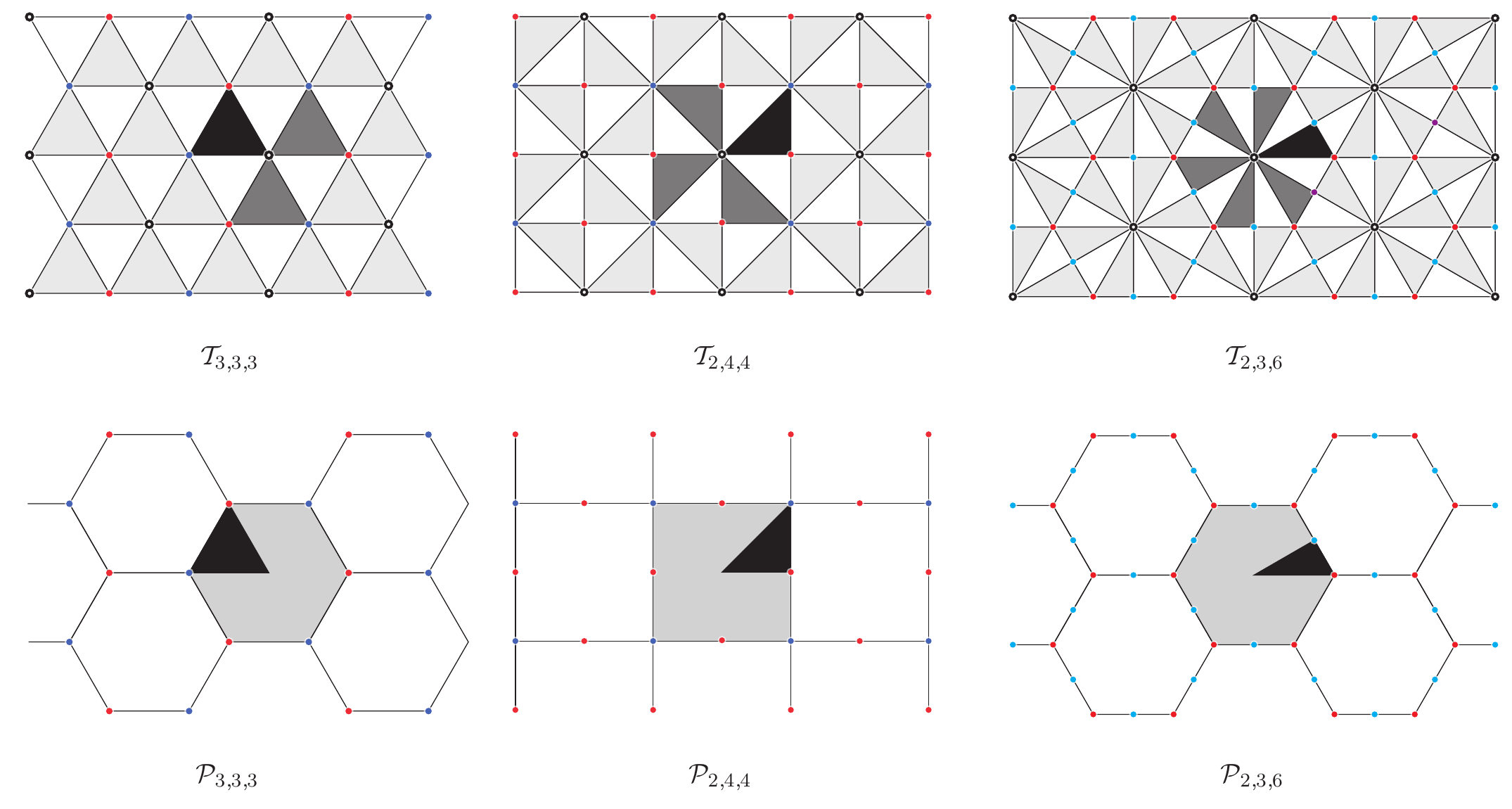,width=\textwidth}\caption{The first row displays the  only three cases of $\T_{a,b,c}$ for which $\S=\R^2$; darker triangles are the positively-oriented ones, i.e., the elements of $\T^+_{a,b,c}$.  The basic triangle is the full black one, whose vertices $A, B$ and $O$ are the red, the blue, and the empty ones, respectively. On the second row the corresponding polygonal tilings are pictured: the basic polygon $P_0$, which is the darker one, has 6, 8 and 12 sides, respectively, but it is  regular   only in the first case.  \label{figuragasante2}}
\end{figure}

Perform  now a full rotation of $T_0$  around its vertex $O$: the result is a $2c$-sided polygon $P_0$ whose boundary is a 2-colored (i.e., bipartite) subgraph of $\partial\T_{a,b,c}$ (see the first row of Fig. \ref{figuragasante2}). Then there exists a $2c$-polygonal tiling $\P_{a,b,c}$ of $\S$, such that any of its tiles is the union of the $2c$ tiles of $\T_{a,b,c}$ with a common  vertex of type $O$. Hence, its 1-skeleton $\partial \P_{a,b,c}$ is a bipartite graph and $V_{\partial \P_{a,b,c}}$ is the subset of $\partial \T_{a,b,c}$ consisting of the  vertices of type either $A$ or $B$. Since the tiles of $\P_{a,b,c}$ are in one-to-one correspondence with the $O$-type vertices of $\partial \T_{a,b,c}$, the von Dyck group $D(a,b,c)$ acts transitively\footnote{But not freely: the stabilizer of $P_0$ is the subgroup generated by $xy$.} on $\P_{a,b,c}$ as well, making it into a regular tiling.\par

Given a regular polygonal tiling $\P$ of $\S$, made of convex (not necessarily regular) $2n$-gons, there is a unique tiling $\P^\prime$ of $\S$, made of $2n$-gons and $n$-gons, such that its 1-skeleton $\partial\P^\prime$ consists of convex $2n$-gons whose vertices are the middle points of the edges of the tiles of $\P$ (see Fig. \ref{figuragasante3}). Observe that, for $n=3$, $\partial\P^\prime$ is precisely the \emph{derived graph} \cite{MR0262097} of $\partial\P$, while for $n>3$  it is   its subgraph, but with the same vertices, referred to as  \virg{medial} by some authors \cite{MR2181153}. The von Dyck group acts transitively both on  the set of $2n$-gonal tiles  of $\P^\prime$ and on the $n$-gonal ones. 

\begin{definition}
 $\P^\prime$ is the \emph{derived tiling} of $\P$.
\end{definition}

\begin{figure}%[H]
\epsfig{file=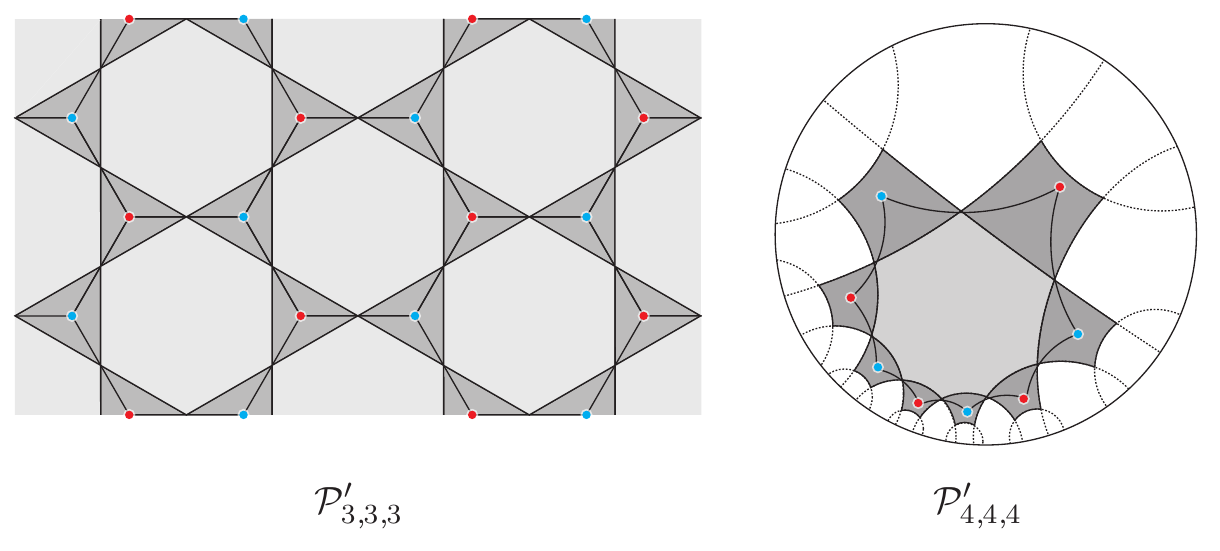,width=\textwidth}\caption{The derived tilings of the planar tiling $\P_{3,3,3}$  and the hyperbolic tiling $\P_{4,4,4} $  are shown, overlying the original tilings , and their (two-colored) 1-skeletons. The tiling $\P^\prime_{3,3,3}$ is known as a \emph{trihexagonal tiling} \cite{MR992195}.  \label{figuragasante3}}
\end{figure}

\section{Coset geometry of von Dyck groups}\label{Sec2}

The coset geometry
\begin{equation}\label{eqTits}
T(a,b,c):=\frac{D(a,b,c)}{H}\bigcup \frac{D(a,b,c)}{K}
\end{equation}
is the union of all $H$-cosets and $K$-cosets of $D(a,b,c)$. It is a classical example of a rank two coset geometry, proposed in 1962 by Tits    \cite{Tits62}, and it fits in the broader context of  incidence geometries, which were proposed to generalize   projective geometry but also found remarkable applications to problems of image recognition \cite{MR1360715,MR0140374,MR2951956,MR823824,MR823825,MR2984157}. But, for our purposes, it is more convenient to regard  $T(a,b,c)$ as a bipartite graph, whose vertices are the elements of $T(a,b,c)$, and two vertices are connected by an edge if the corresponding cosets have nontrivial intersection; this is precisely the construction of the \emph{intersection graph} associated to an incidence relation \cite{MR1672910}. Moreover, $T(a,b,c)$ is equipped with a natural coloring: the color (or\footnote{The term \virg{type} is usually adopted in the context of Tits geometries.} type) of the $H$- (resp., $K$-) cosets  is  \virg{$H$} (resp.,   \virg{$K$}).\par
Since $D(a,b,c)$ acts naturally on the two quotient sets appearing at the right hand side of  \eqref{eqTits}, the coset geometry $T(a,b,c)$ is equipped with a natural  $D(a,b,c)$-action. Furthermore,  $D(a,b,c)$ acts by colored graph transformations, since it sends edges to edges and preserves the type of a coset. The edge $(H,K)$ will be referred to as the \emph{basic edge}; we shall write an edge as a pair, where the first entry is always of type $H$.\par
Consider an edge $(d_HH, d_K  K)\in E_{T(a,b,c)}$,   act on it by an element $d\in D(a,b,c)$, and suppose that the resulting edge $d(d_HH,d_KK)=(dd_HH,dd_KK)$ has the same $H$-vertex as the original one, i.e., $(d_HH, d_K  K)$ has been \virg{rotated} around its vertex $d_HH$. This means that $dd_HH=d_HH$, i.e., $d$ stabilizes $d_HH$ and, as such, it belongs to $H^{d_H}$, which is   identified with $H$ via conjugation.  This proves  the next result.

\begin{figure}%[H]
\epsfig{file=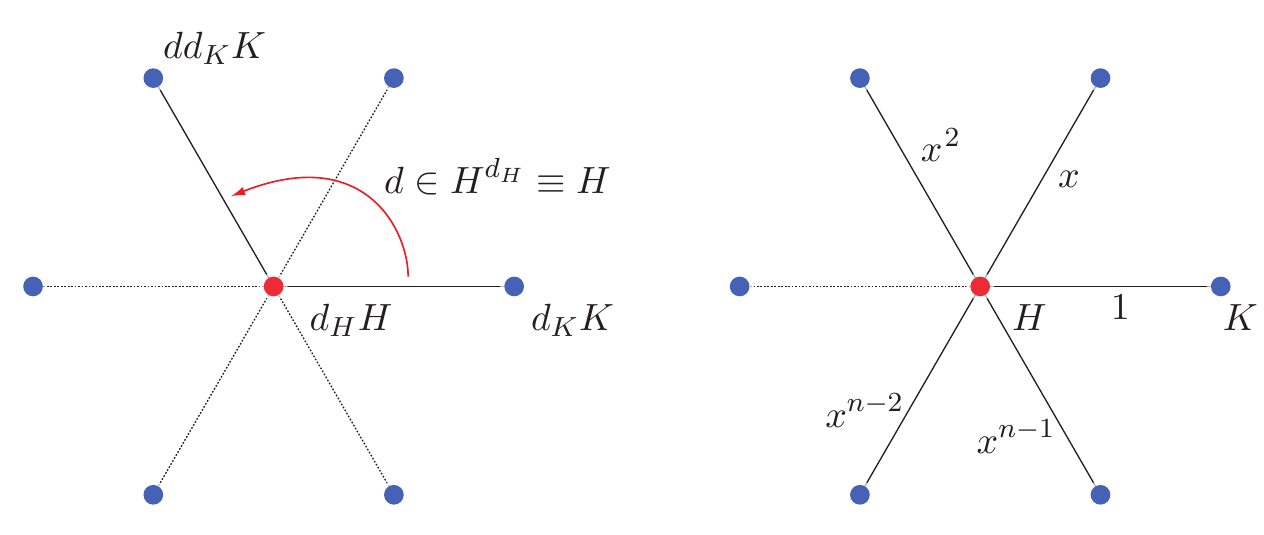,width=90mm}\caption{The subgroup $H^{d_H}$ acts transitively on the tree of the edges hinged at  $d_HH$, and the  action clearly behaves like  a rotation. This is an indication of the planar character of  the coset geometry.  \label{figuragasante5}}
\end{figure}

\begin{lemma}\label{lemStupido0}
All the edges of $T(a,b,c)$ having a common  $H$-type (resp., $K$-type) vertex, say $d_HH$ (resp., $d_KK$), can be     obtained by acting on a fixed one by  $H^{d_H}$ (resp., $K^{d_K}$). 
\end{lemma}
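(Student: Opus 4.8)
The plan is to show that the set of edges sharing a fixed $H$-type vertex $d_HH$ is precisely a single orbit under the action of the stabilizer subgroup $H^{d_H}:=d_H H d_H^{-1}$, and symmetrically for $K$-type vertices. By the symmetry of the construction (swapping the roles of $H$ and $K$, and of $x$ and $y$), it suffices to treat the $H$-type case, and I will carry out the argument for $d_HH$; the $K$-type case follows verbatim.

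First I would fix an edge $(d_HH, d_KK)$ with the prescribed $H$-vertex $d_HH$, and recall from the discussion preceding the lemma that an element $d\in D(a,b,c)$ fixes $d_HH$ if and only if $d\in H^{d_H}$. The inclusion in one direction is immediate: if $d\in H^{d_H}$, then $d(d_HH,d_KK)=(dd_HH,dd_KK)=(d_HH,dd_KK)$, so acting by $H^{d_H}$ produces edges that all retain the common $H$-vertex $d_HH$, hence lie in the claimed set. For the reverse inclusion, I would take an arbitrary edge $(d_HH, d_K'K)$ sharing the vertex $d_HH$, and seek an element of $H^{d_H}$ carrying the reference edge $(d_HH,d_KK)$ to it. Since both $d_KK$ and $d_K'K$ must be incident to $d_HH$ (i.e. have nonempty intersection with it), and the $K$-vertex is determined by its coset, the element $d:=d_K' d_K^{-1}$ — or more carefully, an element chosen so that $dd_KK=d_K'K$ — is the natural candidate, and I would verify that it can be arranged to stabilize $d_HH$, hence to lie in $H^{d_H}$.

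The main obstacle is the reverse inclusion, specifically guaranteeing that the transporting element can be chosen inside the stabilizer $H^{d_H}$ rather than merely in $D(a,b,c)$. The cleanest route is to reduce to the basic edge: since $D(a,b,c)$ acts edge-transitively (statement \eqref{s1}, already established), I may translate by $d_H^{-1}$ and assume $d_H=1$, so the common vertex is $H$ itself and $H^{d_H}=H$. Then any edge with $H$-vertex $H$ has the form $(H, d_KK)$ where $d_K\in D(a,b,c)$ satisfies $d_KK\cap H\neq\emptyset$; the incidence condition means there exist $h\in H$, $k\in K$ with $h=d_Kk$, whence $d_K=hk^{-1}$ and $d_KK=hK$ with $h\in H$. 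Thus every edge hinged at $H$ is of the form $(H,hK)=h(H,K)$ with $h\in H=H^{1}$, exhibiting it as the image of the basic edge under an element of the stabilizer. Untranslating by $d_H$ recovers the general statement, and the geometric picture of Figure \ref{figuragasante5} — the rotation of the fan of edges around the hinge vertex — confirms that this is exactly the transitive action asserted. The only point demanding care is checking that $H\cap K$ being trivial (Lemma \ref{lemmaStupido1}) makes the decomposition $d_K=hk^{-1}$ unambiguous enough to conclude, which I would note but not belabor.
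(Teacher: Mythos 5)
Your proof is correct, and it is in fact more complete than the paper's own argument. The paper's proof is the paragraph preceding the lemma, and it only establishes one half of what you call the two inclusions: namely, that any element of $D(a,b,c)$ carrying an edge hinged at $d_HH$ to another edge hinged at $d_HH$ must stabilize $d_HH$ and hence lie in $H^{d_H}$. The transitivity half --- that \emph{every} edge with that $H$-vertex is actually reached --- is left to the geometric picture of Fig.~\ref{figuragasante5}. Your coset computation supplies exactly this missing step: the incidence condition $H\cap d_KK\neq\emptyset$ yields $h=d_Kk$ and hence $d_KK=hK$ with $h\in H$, so every edge hinged at $H$ is $h(H,K)$. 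That is the right argument and the one the paper implicitly relies on.

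Two small points. First, your reduction to $d_H=1$ is justified by citing statement \eqref{s1}, but in the paper's logical order edge-transitivity is \emph{deduced from} this lemma (see the paragraph following it, which cites Lemma~\ref{lemStupido0} to prove Corollary~\ref{Stat1}), so that citation is circular as written. Fortunately you do not need edge-transitivity at all: translating by the specific element $d_H^{-1}$ only uses the fact that $D(a,b,c)$ acts on $T(a,b,c)$ by colored-graph automorphisms, which is established before the lemma. Drop the reference to \eqref{s1} and the reduction stands. Second, the triviality of $H\cap K$ (Lemma~\ref{lemmaStupido1}) is not needed for the statement as given, which only asserts transitivity of the stabilizer on the fan of edges; it is needed later, for edge-\emph{regularity}. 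Your closing hedge about the decomposition $d_K=hk^{-1}$ being ``unambiguous enough'' can simply be deleted --- ambiguity in $h$ would not affect the conclusion that the edge lies in the $H$-orbit of the basic edge.
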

Observe that $D(a,b,c)$ acts edge-transitively on $T(a,b,c)$. Indeed, any edge  $e:=(d_HH, d_K  K)$  can be written as $e=d_H e^\prime$, with $e^\prime=(H,d_H^{-1}d_KK)$ and $e^\prime$ can in turn be  obtained by acting on the basic edge by an element of $H$ (Lemma \ref{lemStupido0} and   Fig. \ref{figuragasante5}). From Lemma \ref{lemmaStupido1} it follows also that the action is edge-regular: indeed, in view of edge-transitivity, the stabilizer of $e$ is conjugate to the stabilizer of the basic edge, which is the trivial intersection $H\cap K$.
\begin{corol}[Statement \eqref{s1}]\label{Stat1}
 $D(a,b,c)$ acts edge-regular and edge-transitively on $T(a,b,c)$; in particular, there is a unique way to label the edges of  $T(a,b,c)$ by the elements of  $D(a,b,c)$ such that the basic edge is labeled by 1 (see Fig. \ref{figuragasante4}).
\end{corol}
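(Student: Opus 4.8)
The plan is to prove edge-transitivity and edge-regularity separately, and then obtain the labeling as a formal consequence of the fact that a simply transitive action identifies the acting group with any single orbit.

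First I would establish edge-transitivity by transporting an arbitrary edge $e=(d_HH,d_KK)$ onto the basic edge $(H,K)$. The natural first move is to factor out the $H$-vertex: writing $e=d_H e'$ with $e'=(H,d_H^{-1}d_KK)$, the edge $e'$ is now hinged at the basic vertex $H$. The step that actually requires the hypotheses is to recognize $e'$ as a translate of $(H,K)$ under $H$. Since $e$ is an edge, $d_HH\cap d_KK\neq\emptyset$; applying $d_H^{-1}$ gives $H\cap d_H^{-1}d_KK\neq\emptyset$, so some $h\in H$ satisfies $d_H^{-1}d_KK=hK$, whence $e'=(H,hK)=h(H,K)$. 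This is precisely Lemma \ref{lemStupido0} applied at the $H$-vertex $H$. Combining the two moves, $e=d_Hh\,(H,K)$, so every edge lies in the $D(a,b,c)$-orbit of the basic edge.

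Next I would compute the stabilizer of the basic edge to get regularity. An element $d$ fixes $(H,K)$ if and only if $dH=H$ and $dK=K$, i.e. $d\in H\cap K$, which is trivial by Lemma \ref{lemmaStupido1}. By transitivity every edge-stabilizer is conjugate to this one, hence trivial, so the action is free and transitive, i.e. edge-regular. The labeling then follows at once: $d\mapsto d(H,K)$ is a bijection $D(a,b,c)\to E_{T(a,b,c)}$ (surjective by transitivity, injective by freeness), and its inverse assigns to each edge the unique element carrying the basic edge onto it; normalizing the basic edge to carry the label $1$ pins down this bijection uniquely.

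The hard part---really the only non-formal point---is the incidence bookkeeping in the transitivity step, namely producing the element $h\in H$ from the condition that the two cosets meet. A secondary thing to keep honest is that the ordered-pair convention (first entry always of type $H$) is consistent with the action, which holds because $D(a,b,c)$ preserves coset types and therefore cannot swap the two endpoints of an edge; this is what lets me treat the $H$-vertex and $K$-vertex independently in the reduction above.
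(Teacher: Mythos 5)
Your proposal is correct and follows essentially the same route as the paper: the decomposition $e=d_He'$ with $e'=(H,d_H^{-1}d_KK)$ for edge-transitivity, the identification of the basic edge's stabilizer with $H\cap K$ (trivial by Lemma \ref{lemmaStupido1}) for edge-regularity, and the labeling as the resulting bijection $d\mapsto d(H,K)$. The only difference is that you spell out the incidence bookkeeping (extracting $h\in H$ from $H\cap d_H^{-1}d_KK\neq\emptyset$), a detail the paper delegates to Lemma \ref{lemStupido0}.
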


\begin{figure}%[H]
\epsfig{file=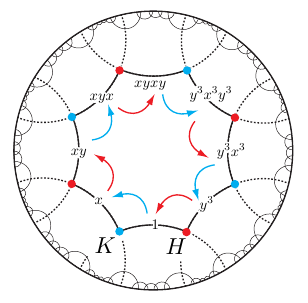,width=60mm}\caption{The 1-skeleton of the hyperbolic tiling $\P_{4,4,4} $   labeled by the elements of $D(4,4,4)$. The arrows represent the action of the generators of $D(4,4,4)$:  they turn out to constitute  the boundary of a tile of $\P^\prime_{4,4,4} $, corresponding to a cycle of $xy$ in  the Cayley graph.  \label{figuragasante4}}
\end{figure}
 
\section{The Cayley  graph of von Dyck groups}\label{Sec3}

According to a general result proved in 1958 by G. Sabidussi, the Cayley  graph $\Gamma(a,b,c)$    is, in a    sense, the unique edge-oriented and edge-colored graph on which $D(a,b,c)$  acts  in a vertex-regular and vertex-transitive way \cite{MR0097068}. In Corollary \ref{Stat1}  we proved that $D(a,b,c)$ acts  on the \hbox{(vertex-)}colored graph $T(a,b,c)$ in an edge-regular and edge-transitive way: then it is natural to suspect that $\Gamma(a,b,c)$ and  $T(a,b,c)$ can be obtained one from the other by, roughly speaking, \virg{replacing edges with vertices}.\par
Recall that, by definition, $V_{\Gamma(a,b,c)}=D(a,b,c)$ and that, for any $d\in D(a,b,c)$ there is a unique edge, call it $b(d)$, of $T(a,b,c)$ which is labeled by $d$ according to Corollary \ref{Stat1}. In other words, the map
\begin{eqnarray}
V_{\Gamma(a,b,c)} &\stackrel{b}{\longrightarrow} &E_{T(a,b,c)}\nonumber\\
d &\longmapsto & b(d)\label{eqMapB}
\end{eqnarray}
is bijective.
\begin{corol}[Statement \eqref{s2}]\label{Stat2}
The map $b$   defined by \eqref{eqMapB} is  $D(a,b,c)$-equivariant.
\end{corol}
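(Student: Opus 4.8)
The plan is to show that the bijection $b$ from \eqref{eqMapB} intertwines the two $D(a,b,c)$-actions, i.e. that $b(dd') = d\, b(d')$ for all $d,d'\in D(a,b,c)$, where on the left $d d'$ is the action of $D(a,b,c)$ on itself by left multiplication (its action on $V_{\Gamma(a,b,c)}=D(a,b,c)$), and on the right $d\,(\cdot)$ is the natural action of $D(a,b,c)$ on $E_{T(a,b,c)}$ described in Section \ref{Sec2}. The key is to unpack the definition of $b$ through Corollary \ref{Stat1}: the labeling of edges by group elements is characterized by two properties, namely that the basic edge $(H,K)$ is labeled by $1$, and that the labeling is $D(a,b,c)$-equivariant \emph{by construction}, since it is built from the edge-regular, edge-transitive action. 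Concretely, the edge labeled by $d$ is $b(d)=d\cdot(H,K)=(dH,dK)$; this is well-defined precisely because edge-regularity (Corollary \ref{Stat1}, resting on Lemma \ref{lemmaStupido1}) guarantees that $d\mapsto d\cdot(H,K)$ is a bijection.

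First I would record this explicit description $b(d)=(dH,dK)$, justifying it from Corollary \ref{Stat1}: the labeling sends the basic edge to $1$ and is compatible with the action, so the edge labeled by a given $d$ is obtained by letting $d$ act on the basic edge. Then the equivariance is essentially a one-line verification: for $d,d'\in D(a,b,c)$ we have
\begin{equation}
b(dd') = (dd'H,\, dd'K) = d\cdot(d'H, d'K) = d\cdot b(d'),
\end{equation}
where the middle equality is just the definition of the $D(a,b,c)$-action on cosets (and hence on edges of $T(a,b,c)$), which sends $(d_HH,d_KK)$ to $(dd_HH,dd_KK)$ as recalled in Section \ref{Sec2}. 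The outer equalities are the explicit formula for $b$ applied to $dd'$ and to $d'$ respectively.

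There is no genuine obstacle here; the content has already been done in proving Corollary \ref{Stat1}. The only point requiring care is a clean bookkeeping of \emph{which} action sits on each side of the equivariance equation: left multiplication on the source $V_{\Gamma(a,b,c)}=D(a,b,c)$ versus the coset action on the target $E_{T(a,b,c)}$. The mild subtlety I would make explicit is that the identification $b(d)=d\cdot(H,K)$ is legitimate because edge-regularity makes the orbit map $d\mapsto d\cdot(H,K)$ a bijection onto $E_{T(a,b,c)}$, so "the edge labeled $d$" and "the translate $d\cdot(H,K)$ of the basic edge" literally coincide; once this identification is in place, equivariance is immediate from associativity of the group action. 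I would therefore keep the proof short, emphasizing that $b$ is nothing but the orbit map of the basic edge under the (simply transitive) action, and such orbit maps are always equivariant.
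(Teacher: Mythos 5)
Your proof is correct and follows essentially the same route as the paper's: both identify $b(d)$ with the translate $d\cdot(H,K)$ of the basic edge (via the labeling of Corollary \ref{Stat1}) and deduce equivariance directly from the definition of the coset action. Your version merely spells out more explicitly that $b$ is the orbit map of the basic edge and that edge-regularity is what makes this identification a bijection.
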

\begin{proof}
 In order to prove that $b(d^\prime d)=d^\prime b(d)$, it suffices to observe  that if an edge $e$ of $T(a,b,c)$ is labeled by $d$, then the edge $d^\prime e$ is labeled by $d^\prime d$.
\end{proof}
If one tries  to reconstruct $\Gamma(a,b,c)$ out of $T(a,b,c)$, then Corollary \ref{lemStupido0} allows to obtain the vertices of the former out of the edges of the latter. It remains to describe the edges of $\Gamma(a,b,c)$. Also recall that   $\Gamma(a,b,c)$ is edge-colored and directed, so that not only its edges but their color and direction must be recovered as well.\par
Lemma \ref{lemStupido0}, together with the edge-regularity   of the $D(a,b,c)$-action, guarantees  the existence of a map
\begin{equation}\label{eqPSI}
\psi:I(a,b,c)\longrightarrow H\cup K
\end{equation}
assigning to any pair of incident  edges $(e_1,e_2)$ of $T(a,b,c)$ which have a common $H$-type (resp., $K$-type) vertex, the unique element $x^r\in H$ (resp., $y^s\in K$) such that the label of $e_2$  is the label of $e_1$ multiplied by  $x^r$ (resp., $y^s$). The existence of such an element   is clear from the tree displayed at the right of Fig. \ref{figuragasante5}:  $x^r$ is just the \virg{ratio}\footnote{More precisely, $r=n_2-n_1$, where    $x^{n_1}$ (resp., $x^{n_2}$) corresponds to $e_1$ (resp., $e_2$).} of $e_2$ by $e_1$.
\begin{corol}[Statement \eqref{s3}]\label{Stat3}\label{corSt3}
 There  is an $x$-colored (resp., $y$-colored) directed edge from the vertex $d_1$ to the vertex  $d_2$ of $\Gamma(a,b,c)$ if and only if $\psi(b(d_1),b(d_2))$ equals $x$ (resp., $y$).
\end{corol}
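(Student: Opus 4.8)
The plan is to make both sides of the claimed equivalence fully explicit and then observe that they coincide. First I would pin down the bijection $b$ in closed form. Since the basic edge $(H,K)$ carries the label $1$ (Corollary \ref{Stat1}), we have $b(1)=(H,K)$, and because $b$ is $D(a,b,c)$-equivariant by Corollary \ref{Stat2},
\[
b(d)=b(d\cdot 1)=d\cdot b(1)=d\cdot(H,K)=(dH,dK)
\]
for every $d\in D(a,b,c)$. In particular the $H$-type vertex of $b(d)$ is $dH$ and its $K$-type vertex is $dK$.

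Next I would determine exactly when $b(d_1)$ and $b(d_2)$ are incident, and compute $\psi$ on such a pair. The edges $b(d_1)=(d_1H,d_1K)$ and $b(d_2)=(d_2H,d_2K)$ share their $H$-type vertex precisely when $d_1H=d_2H$, i.e.\ $d_1^{-1}d_2\in H$, and their $K$-type vertex precisely when $d_1^{-1}d_2\in K$. By Lemma \ref{lemmaStupido1} these two cases cannot occur simultaneously unless $d_1=d_2$, so for distinct vertices $\psi(b(d_1),b(d_2))$ is unambiguous. In the $H$-case, the label of $b(d_2)$, namely $d_2$, equals the label $d_1$ of $b(d_1)$ multiplied by $d_1^{-1}d_2\in H$; comparing this with the defining property of $\psi$ (the label of $e_2$ is that of $e_1$ times $\psi(e_1,e_2)$) yields $\psi(b(d_1),b(d_2))=d_1^{-1}d_2$, and symmetrically with $K$ in the other case.

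From here the equivalence is immediate. By the very definition of the Cayley graph there is an $x$-colored directed edge from $d_1$ to $d_2$ if and only if $d_2=d_1x$, equivalently $d_1^{-1}d_2=x\in H$; by the previous step this holds if and only if $b(d_1)$ and $b(d_2)$ meet in their $H$-type vertex with $\psi(b(d_1),b(d_2))=x$. The $y$-colored case is identical with $K$ in place of $H$. The orientation is correctly recovered by the ordering of the pair, since $\psi(b(d_2),b(d_1))=(d_1^{-1}d_2)^{-1}=x^{-1}$, so swapping the arguments reverses the arrow. Routine bookkeeping aside, I expect the one genuinely delicate point to be the compatibility of conventions: one must check that the right-translation rule $d\mapsto(dH,dK)$ for the labeling (forced by equivariance together with the \emph{left} action on cosets) matches the right-translation rule $d_2=d_1x$ defining the directed edges of $\Gamma(a,b,c)$. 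Once both sides are reduced to the single condition $d_1^{-1}d_2=x$, nothing remains to prove.
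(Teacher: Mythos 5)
Your proposal is correct and follows essentially the same route as the paper: compute $b(d)=(dH,dK)$ explicitly, observe that an $x$-colored edge $d_2=d_1x$ forces $b(d_1)$ and $b(d_2)$ to share their $H$-type vertex, and read off $\psi$ as the ``ratio'' $d_1^{-1}d_2$. Your version is somewhat more careful than the paper's (you justify via Lemma \ref{lemmaStupido1} that the shared vertex type, and hence the value of $\psi$, is unambiguous, and you spell out the converse rather than just appealing to bijectivity of $b$), but there is no substantive difference in approach.
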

\begin{proof}
 Let us consider the $x$-colored case only. If $d_1$ is connected to $d_2$ by a directed edge in the Cayley  graph $\Gamma(a,b,c)$, it means that $d_2=d_1x$. By the definition \eqref{eqMapB} of the map $b$, $e_1:=b(d_1)$ is the edge $(d_1H, d_1K)$, while $e_2:=b(d_2)$ is the edge $(d_1H, d_2K)$. Hence, $(e_1,e_2)\in I(a,b,c) $ and, as such,  the map $\psi$ defined by \eqref{eqPSI} can be applied to it:   the result is $\psi (e_1,e_2)=x$, to be interpreted as the \virg{ratio} $\frac{d_2}{d_1}$. The converse is also true, due to the bijectivity of $b$.
\end{proof}
Fig. \ref{figuragasante} helps to visualize the above proof. Let $d_1=1$ and $d_2=x$ be vertices of the Cayley graph (displayed at the right): there is an $x$-colored directed edge between them (blue arrow). By the vertex-to-edge duality, portrayed in the center, this   edge of the Cayley graph is responsible for the fact that   the edge $e_2:=(H,xK)$ of the coset geometry (left) is obtained from $e_1:=(H,K)$ by acting on it by $x$. Formally, the datum $x$ can be recovered as $\psi(e_1,e_2)$, since $x$ is precisely the \virg{ratio} $\frac{e_2}{e_1}$.

\section{The duality between the Cayley  graph and the coset geometry in the context of tilings}\label{SecDuality}

In this section we prove the last statement \eqref{s4}, linking together the notions and  the results collected in the previous sections.\par%Section \ref{Sec1}, Section  \ref{Sec2} and Section \ref{Sec3}. \par
First, we establish a correspondence between the bipartite graphs $\partial\P_{a,b,c}$ and $T(a,b,c)$: define a map
\begin{eqnarray}
E_{T(a,b,c)}&\longrightarrow & E_{\partial\P_{a,b,c}}\nonumber\\
d(H,K)\equiv d&\longmapsto& d(A,B),\label{eqdAB}
\end{eqnarray}
where the identification $d(H,K)\equiv d$ is due to Corollary \ref{Stat1} and $(A,B)$ is one of the sides of the basic triangle $T_0$, defined in Section \ref{Sec1}. Since $\P_{a,b,c}$ is a regular tiling, the map \eqref{eqdAB} is surjective. It is also injective since an orientation-preserving isometry which fixes the segment $(A,B)$ must be identical. Hence, the abstractly defined graph $T(a,b,c)$ is the 1-skeleton of a concrete tiling of $\S$. On this surface, the duality between the Cayley  graphs and the coset geometry discussed in Section \ref{Sec3} can be recast in terms of the tiling  $\P_{a,b,c}$  and its derived tiling.\par
By the  definition of derived tiling (see Section \ref{Sec1}), there is a bijection
\begin{eqnarray}
E_{\partial \P(a,b,c)} &\stackrel{\mu}{\longrightarrow} &V_{\partial\P^\prime(a,b,c)}\nonumber\\
d  &\longmapsto & \mu(d)\nonumber\label{eqMapBeta}
\end{eqnarray}
where $\mu(d)$ is the middle point of the edge $d(A,B)$, and the edges of $\partial \P(a,b,c)$ are identified with the elements of $D(a,b,c)$ via \eqref{eqdAB}. Hence, $\mu\circ b$ establishes a one-to-one correspondence between the vertices of $\Gamma(a,b,c)$ and those of $\partial \P(a,b,c)$. Recall that the vertices $\mu(d_1)$ and $\mu(d_2)$ form an edge $e$ in  $\partial\P^\prime(a,b,c)$ if and only if $d_1(A,B)$ and $d_2(A,B)$ are incident and belong to the same tile of $\P(a,b,c)$ (see Fig \ref{figuragasante3}). Since $\partial \P(a,b,c)$ is a bipartite graph,  the edge $e$ can be given the same color of the vertex $v:=d_1(A,B)\cap d_2(A,B)$; moreover, the edge $e$ can be directed from $\mu(d_1)$ to $\mu(d_2)$ if the rotation sending $d_1(A,B)$ to $d_2(A,B)$ within the tile they belong to, is counterclockwise, and vice-versa.\footnote{In Fig. \ref{figuragasante4} such a tile is the hyperbolic basic octagon $P_0$, and $(A,B)$ is the edge labeled by \virg{$1$}; the vertex $v$ is a vertex of $\partial P_0$, and $d_1$ (resp., $d_2$) is the word coming before (in a clockwise sense) $v$ (resp., after it). A red (or blue) arrow, running counterclockwise, rotates the corresponding edge $\mu(d_1)$ on $\mu(d_2)$: hence, in the Caley graph, there is an oriented red (or blue) edge from $d_1$ to $d_2$. The eight vertices loop made by the red and blue arrows corresponds precisely to the element $xy$ of order four.} Suppose that this rotation is counterclockwise and that $v$ is of type $A$: then, in view of correspondence \eqref{eqdAB}, the edges $d_1(H,K)$ and $d_2(H,K)$ have the $H$-type vertex in common and $d_2=d_1x$, since, by definition, the inner angles of a   tiles of $\P(a,b,c)$ at its $A$-type vertices are $\frac{2\pi}{a}$.   Hence, in view of Corollary \ref{corSt3}, there is a directed $A$-colored edge from $\mu(d_1)$ to $\mu(d_2)$ in $\partial\P^\prime(a,b,c)$ if and only if there is a directed $x$-colored edge between $d_1$ and $d_2$ in $\Gamma(a,b,c)$.

\section{Applications and perspectives}\label{Sec5}
In the last Section \ref{SecDuality} we showed the relationship between two abstract procedures to associate a graph to the von Dyck group, in such a way that the group acts on it: undoubtedly, among them the Cayley  graph is a much more broadly exploited construction, being linked to the important notion of the \emph{genus of a group} (i.e., the smallest genus of a surface where its Cayley  graph can be embedded), introduced in 1972 by A.T. White \cite{MR0317980}. On the other hand, except for some sparse and marginal papers, there are no remarkable group theoretic applications of the theory of coset geometries. 
\subsubsection*{On the role of coset geometries} 
Now the coset geometry can be regarded as the 1-skeleton of a tiling on which the von Dyck group acts transitively, and as such it is linked to the notion of the \emph{strong symmetric genus} of a group (i.e., the smallest genus of a surface on which the group acts by orientation-preserving isometries \cite{MR701174}). It should be stressed that,  all constructions being $D(a,b,c)$-equivariant, the  results obtained descend to the factors of $D(a,b,c)$ which, as observed by P.M. Neumann in 1973, constitute in fact a very large family \cite{MR0333017}. In particular, we can recast (by using perhaps a simpler method)  a  result of Tucker \cite{MR701174}.
\begin{corol}[Tucker, 1983]\label{corTuck}
 Let $\frac{\S}{K}$ be a compact surface. Then the Cayley  graph of $\frac{D(a,b,c)}{K}$, with respect to the generating set $\{xK,yK\}$, is embedded into  $\frac{\S}{K}$ in a $\frac{D(a,b,c)}{K}$-invariant way.
\end{corol}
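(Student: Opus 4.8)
The plan is to read this statement off the geometric realization of the Cayley graph provided by statement \eqref{s4}. Recall from Section \ref{SecDuality} that the composite $\mu\circ b$ identifies $\Gamma(a,b,c)$, as a colored and directed graph, with the $1$-skeleton $\partial\P^\prime_{a,b,c}$ of the derived tiling, the latter being embedded into $\S$; and that this identification is $D(a,b,c)$-equivariant, since $b$ is (Corollary \ref{Stat2}) and $\mu$ is too (it sends an edge to its midpoint, and isometries preserve midpoints). In particular, through $\mu\circ b$ the left $D(a,b,c)$-action on $V_{\Gamma(a,b,c)}=D(a,b,c)$ is carried to the restriction to $\partial\P^\prime_{a,b,c}$ of the isometric $D(a,b,c)$-action on $\S$. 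Everything therefore reduces to dividing this picture by $K$.

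First I would unwind the hypothesis. The requirement that $\frac{\S}{K}$ be a surface forces the normal subgroup $K$ to act freely on $\S$, i.e. to contain no nontrivial element fixing a point of $\S$; since such elements are exactly the conjugates of the powers of the three vertex rotations $x$, $y$ and $xy$ (which generate the cyclic stabilizers $\Span{x}$, $\Span{y}$ and $\Span{xy}$), this means $K$ is torsion-free. As $K$ is moreover discrete, the projection $\pi\colon \S\longrightarrow\frac{\S}{K}$ is then a covering map whose group of deck transformations is precisely $K$, and compactness of $\frac{\S}{K}$---i.e. cocompactness of $K$---plays no role in what follows.

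Next I would push the embedded graph forward along $\pi$. As $K\leq D(a,b,c)$, the embedded $\partial\P^\prime_{a,b,c}$ is $K$-invariant, so $\pi(\partial\P^\prime_{a,b,c})$ is a graph drawn on $\frac{\S}{K}$, and I claim it is embedded: any crossing of two of its edges at a point $p$ would lift, through a local inverse of the covering $\pi$, to a crossing of edges of $\partial\P^\prime_{a,b,c}$ on $\S$, contradicting that the latter is embedded. By freeness the restriction of $\pi$ to $\partial\P^\prime_{a,b,c}$ is precisely the quotient by the $K$-action. Transporting this $K$-action back through $\mu\circ b$, it becomes left multiplication by $K$ on $D(a,b,c)$, whose orbits are the cosets, i.e. the elements of $\frac{D(a,b,c)}{K}$; and since left multiplication by $K$ commutes with the right multiplications by $x$ and $y$ defining the edges and preserves their colors and orientations (Corollaries \ref{Stat2} and \ref{corSt3}), the quotient graph is canonically the Cayley graph of $\frac{D(a,b,c)}{K}$ with respect to $\{xK,yK\}$. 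The action of $\frac{D(a,b,c)}{K}$ on $\frac{\S}{K}$ inherited from $\S$ (legitimate because $K$ is normal) then leaves $\pi(\partial\P^\prime_{a,b,c})$ invariant, yielding the asserted equivariant embedding.

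The hard part will be exactly the double bookkeeping in the last paragraph: checking that the set-theoretic image $\pi(\partial\P^\prime_{a,b,c})$ is a genuine embedded graph, and that the geometric quotient by $K$ coincides, colors and orientations included, with the combinatorial Cayley graph of $\frac{D(a,b,c)}{K}$. Both hinge on the freeness of the $K$-action---so that $\pi$ is an honest covering rather than an orbifold projection, and crossings really do lift---and on the full equivariance of the identification $\Gamma(a,b,c)\cong\partial\P^\prime_{a,b,c}$ established in Section \ref{SecDuality}, which is what guarantees that the geometric and combinatorial notions of \virg{edge}, \virg{color} and \virg{orientation} are quotiented consistently.
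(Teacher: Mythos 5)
The paper offers no formal proof of this corollary---it is justified only by the preceding remark that all constructions are $D(a,b,c)$-equivariant and therefore descend to the factors of $D(a,b,c)$---and your argument is exactly that remark carried out in detail: quotient the equivariant embedding $\Gamma(a,b,c)\cong\partial\P^\prime_{a,b,c}\subseteq\S$ of Section \ref{SecDuality} by the action of $K$. Your reading of the hypothesis (that \virg{$\frac{\S}{K}$ is a compact surface} means $K$ is torsion-free and cocompact, so that $\pi$ is a genuine covering) is the standard one in Tucker's setting, and with it your proof is correct and follows the paper's intended route.
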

\subsubsection*{Genus and symmetric genus of factors of von Dyck groups}
In spite of the name \virg{duality} used before, the passage from a tiling to its derivative cannot be easily inverted. An easy consequence of Corollary \ref{corTuck} is that the genus of  $\frac{D(a,b,c)}{K}$ is bounded by its strong symmetric genus: a procedure to recover a tiling out of its derivative would allow to prove the converse as well.

\subsubsection*{Enumeration of the elements of von Dyck groups} 
Constructing the Cayley  graph of a group is the same as  solving the word problem for it \cite{MR2109550}. The duality between the Cayley  graph and the coset geometry discussed in Section \ref{SecDuality} may provide an effective way to construct the Cayley  graph of von Dyck groups and its factors. Consider, for example, the groups $D(n,n,n)$: they are  important since they cover the free Burnside groups $B(2,n)$, as noticed by some authors (see. e.g., \cite{MR823825,cryptoeprint2011398}). In this case, $\P_{2n}:=\P_{n,n,n}$ is made of regular $2n$-gons, and the whole tiling can be constructed   algorithmically by means of subsequent \virg{enlargements}  of the basic polygon $P_0$. This way, one can recursively define a map $e:\N_0\longrightarrow E_{\partial\P_{2n}}$ which is bijective, i.e., allows to enumerate the edges of $\partial\P_{2n}$ and, consequently, the elements of $D(n,n,n)$. We omit the details of such a construction, since they  can be easily reproduced.

\begin{figure}%[H]
\epsfig{file=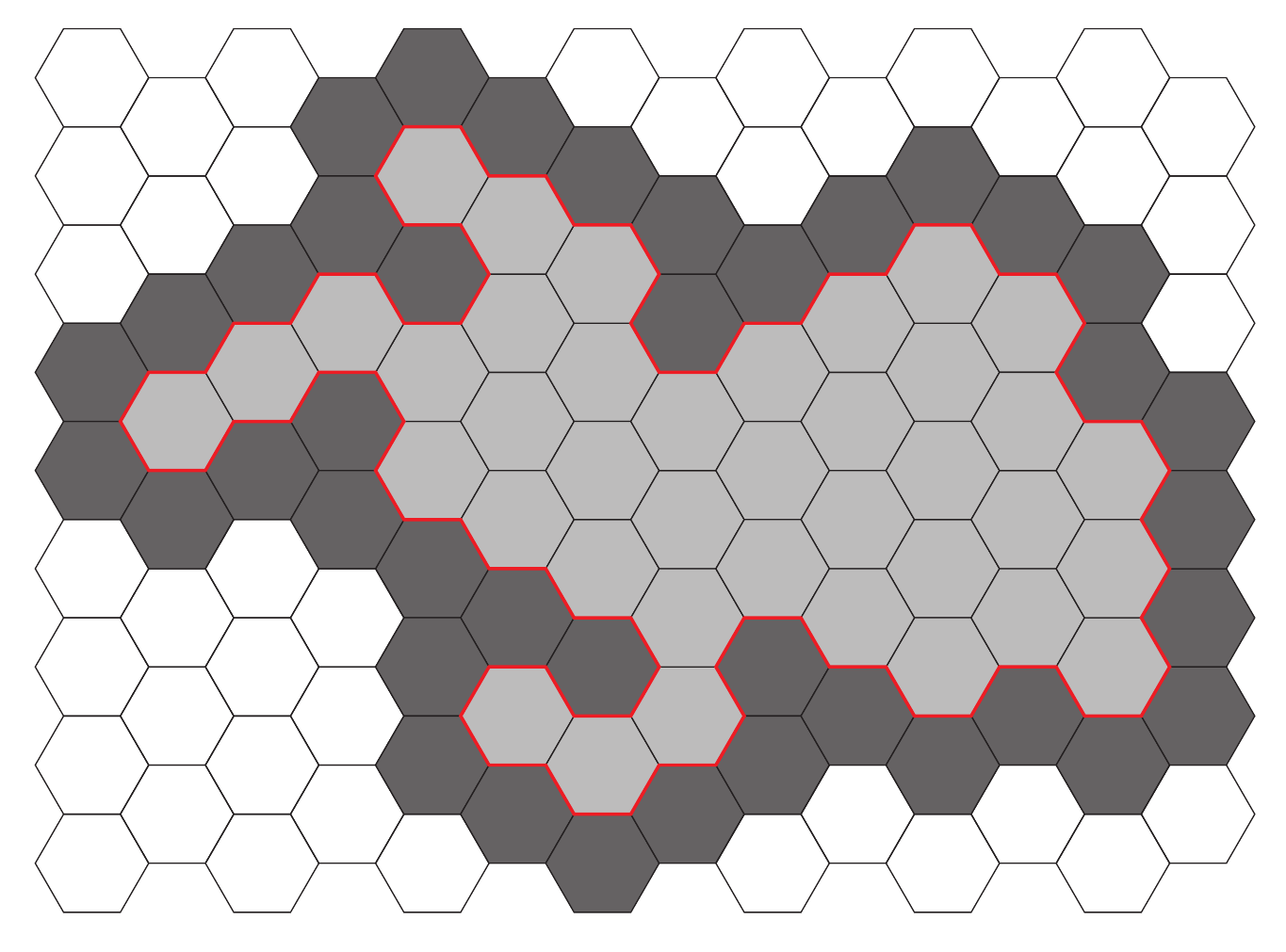,width=90mm}\caption{The enlargement of the sub-tiling $\Q\subseteq\P_{3,3,3}$ (soft grey) is accomplished by adding all the  hexagons (hard grey) which have  a vertex lying on the (not convex) polygon $\Gamma=\partial \Q$ (red line). The number of added hexagons depend on the internal angles of $\Gamma$, and it is given by formula \eqref{formulaAllargamento}.   \label{figuragasanteX}}
\end{figure}

\begin{corol}\label{corEnAlg}
There is a recursive way to enumerate the elements of $D(n,n,n)$, i.e.,   all its words can be  listed without repetitions.
\end{corol}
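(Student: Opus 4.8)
The plan is to exhibit an explicit recursive enumeration algorithm and prove it terminates in listing each element of $D(n,n,n)$ exactly once. The key observation, already set up in the preceding paragraph, is that the word problem for $D(n,n,n)$ is equivalent to building its Cayley graph, and the duality of Section \ref{SecDuality} translates this into the purely geometric task of constructing the tiling $\partial\P_{2n}$ edge by edge. So the goal reduces to defining a bijective map $e:\N_0\longrightarrow E_{\partial\P_{2n}}$ by an algorithm that grows the tiling outward from the basic polygon $P_0$, never revisiting an edge. First I would fix the basic polygon $P_0$ as the seed sub-tiling $\Q_0$, and enumerate its $2n$ boundary edges in counterclockwise order starting from the edge $(A,B)$ labeled by $1$ (using the labeling from Corollary \ref{Stat1}); by the identification \eqref{eqdAB}, listing edges of $\partial\P_{2n}$ is the same as listing elements of $D(n,n,n)$.

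Next I would define the enlargement step $\Q_k\mapsto\Q_{k+1}$ as in Fig. \ref{figuragasanteX}: given the current finite sub-tiling $\Q_k$ with boundary polygon $\Gamma_k=\partial\Q_k$, adjoin every $2n$-gon of $\P_{2n}$ sharing at least one vertex with $\Gamma_k$. The crucial bookkeeping is to enumerate only the \emph{new} edges introduced at each step, traversing the boundary $\Gamma_k$ counterclockwise and, at each vertex, reading off the freshly added edges in a fixed rotational order. Because the underlying tiling is regular and $D(n,n,n)$ acts transitively on its tiles, the combinatorics of each enlargement is governed entirely by the internal angles of $\Gamma_k$, so the count of added polygons (hence of new edges) is given by the stated formula \eqref{formulaAllargamento}; this makes each step finite and effectively computable.

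The three properties to verify are: (i) \emph{no repetitions}, which follows because at step $k+1$ one lists only edges absent from $\Q_k$, and the traversal order is deterministic; (ii) \emph{exhaustiveness}, i.e.\ $\bigcup_k\Q_k=\P_{2n}$, which holds because every tile of $\P_{2n}$ lies at finite combinatorial distance from $P_0$ and each enlargement strictly increases the set of tiles within a given radius, so every edge is eventually reached; and (iii) \emph{well-definedness of the labels}, which is immediate from Corollary \ref{Stat1} and the bijection \eqref{eqdAB}. Concatenating the partial enumerations of new edges across all steps yields the desired bijection $e$, and transporting it through \eqref{eqdAB} produces a repetition-free listing of the words of $D(n,n,n)$.

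The main obstacle I expect is purely organizational rather than conceptual: specifying the rotational traversal rule precisely enough that the ordering is canonical and the ``new edge'' selection is unambiguous at vertices where several added polygons meet. One must ensure that an edge shared by two newly added $2n$-gons is counted once, and that the non-convex reentrant vertices of $\Gamma_k$ (visible in Fig. \ref{figuragasanteX}) are handled consistently in the angle-based count \eqref{formulaAllargamento}. Since the excerpt explicitly states that these details ``can be easily reproduced,'' the proof can legitimately defer them and rest on the existence of the recursive map $e$ together with properties (i)--(iii), which suffice to establish that the elements of $D(n,n,n)$ can be listed without repetitions.
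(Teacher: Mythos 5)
Your proposal follows essentially the same route as the paper's own sketch: seed the recursion with the $2n$ boundary edges of $P_0$, enlarge the sub-tiling $\Q_k$ by adjoining all tiles meeting its boundary, count the added tiles via \eqref{formulaAllargamento}, and list only the new edges at each step in a fixed rotational order along $\Gamma_k$. Your explicit verification of exhaustiveness and non-repetition merely fleshes out details the paper leaves implicit, so the two arguments coincide in substance.
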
 
\begin{proof}[Proof (a sketch)] Figure \ref{figuragasante4} shows the basis of the recursion: an injective map $e:\{0,1,\ldots, 2n-1\}\longrightarrow E_{\partial\P_{2n}}$  whose image is the boundary of the basic polygon, $\partial P_0$, i.e., a list  of the $2n$ words   appearing along it.\par  Now suppose that $e$ is defined on a finite subset of $\N_0$, such that its image is the boundary of a sub-tiling $\Q\subseteq\P_{2n}$ and $\Gamma$ is a simple\footnote{Non self-intersecting.} loop consisting of $N$ edges: in other words, it is   given a list (without repetitions) of all the edges contained into the $N$-gon $\Q$. Then it is easy to check that there are exactly 
\begin{equation}\label{formulaAllargamento}
N(n-1)-\sum_{k=1}^Ni_k
\end{equation}
 tiles of $\P_{2n}$ outside $\Q$ which intersect $\Q$, where $\frac{2\pi i_k}{n}$ is the internal angle of $\Gamma$ at its $k\Th$ vertex. Obviously, these tiles can be enumerated by the parameter $k=1,2,\ldots, N$ and, for each $k$, by another parameter in $1,2\ldots, n-i_k-1$ (the latter counts the external tiles hinged at the $k\Th$ vertex of $\Gamma$). Since the edges of a single tile can be listed, it is also possible to list all the edges contained into a sub-tiling $\widetilde{Q}$ which is obtained by adding to $\Q$ all incident tiles (see Fig. \ref{figuragasanteX}).
 \end{proof}
Corollary \ref{corEnAlg} becomes more interesting when it descends to the factors of $D(n,n,n)$, for instance, $B(2,n)$. It is well-known that the latter can be obtained by factoring   the former by the $n\Th$ powers subgroup $K_n:=D(n,n,n)^n$. In 1986 A.M. Vinogradov proposed an algorithmic way to check the finiteness of  Fuchsian $B(2,n)$, i.e., with $n>3$, based on the computation of a fundamental domain for $K_n$ in the hyperbolic plane \cite{MR823825} (see also \cite{Seward}).\footnote{The planar case is useful for understanding the behavior of the algorithm: in Figure \ref{fig2} a fundamental domain of $B(2,3)$ is represented as the intersection of the three bands in the plane bounded by the lines passing through the edges with the same color. The algorithm produces exactly such bands, and stops when the intersection becomes a closed polygon which, in this toy model, occurs after three steps.} In turn, to run such an algorithm, it is necessary to effectively list the elements of $K_n$: an enumeration of the elements of the  subgroup $K_n$ based on the result of Corollary \ref{corEnAlg} will be certainly more efficient than  the standard lexicographic method.

\begin{example}[A toy model: $B(2,3)$]    Up to isomorphisms, there are only 5 groups of order 27  \cite{MR1357169}, and a    unique one which is   meta-abelian without being abelian, has two generators, has exponent 3, and also possesses a cyclic derived subgroup:  
 \begin{equation}\label{presentazioneCarmela}
B(2,3)=\Span{x,y\mid x^3=y^3=[x,y,x]=[x,y,y]=1}.
\end{equation}
 The group \eqref{presentazioneCarmela} is the free Burnisde\footnote{A nice and exhaustive review on Burnisde problem can be found in Section 6.8 of H.S. Coxeter's book \cite{MR0349820}.} group   and (see \cite{cryptoeprint2011398,MR992077}) any of its element  $w$ can be written as 
\begin{equation}\label{eqB23elementi}
w=x^ay^b[x,y]^c,\quad a,b,c\in\{0,1,2\}.
\end{equation}
 Figure \ref{fig2} displays the coset geometry of $B(2,3)$ which, as a quotient of $\partial\P_{3,3,3}$, can be embedded in   a domain in $\R^2$ with some identifications on its boundary; each edge is labeled by the corresponding element of the group, according to the three parameters  \eqref{eqB23elementi}. The derived tiling $\T^\prime_{3,3,3}$ is also shown: the triangular cycles correspond to the generators $x$ and $y$, while the hexagonal ones correspond to their product $xy$.

\begin{figure}
 \epsfig{file=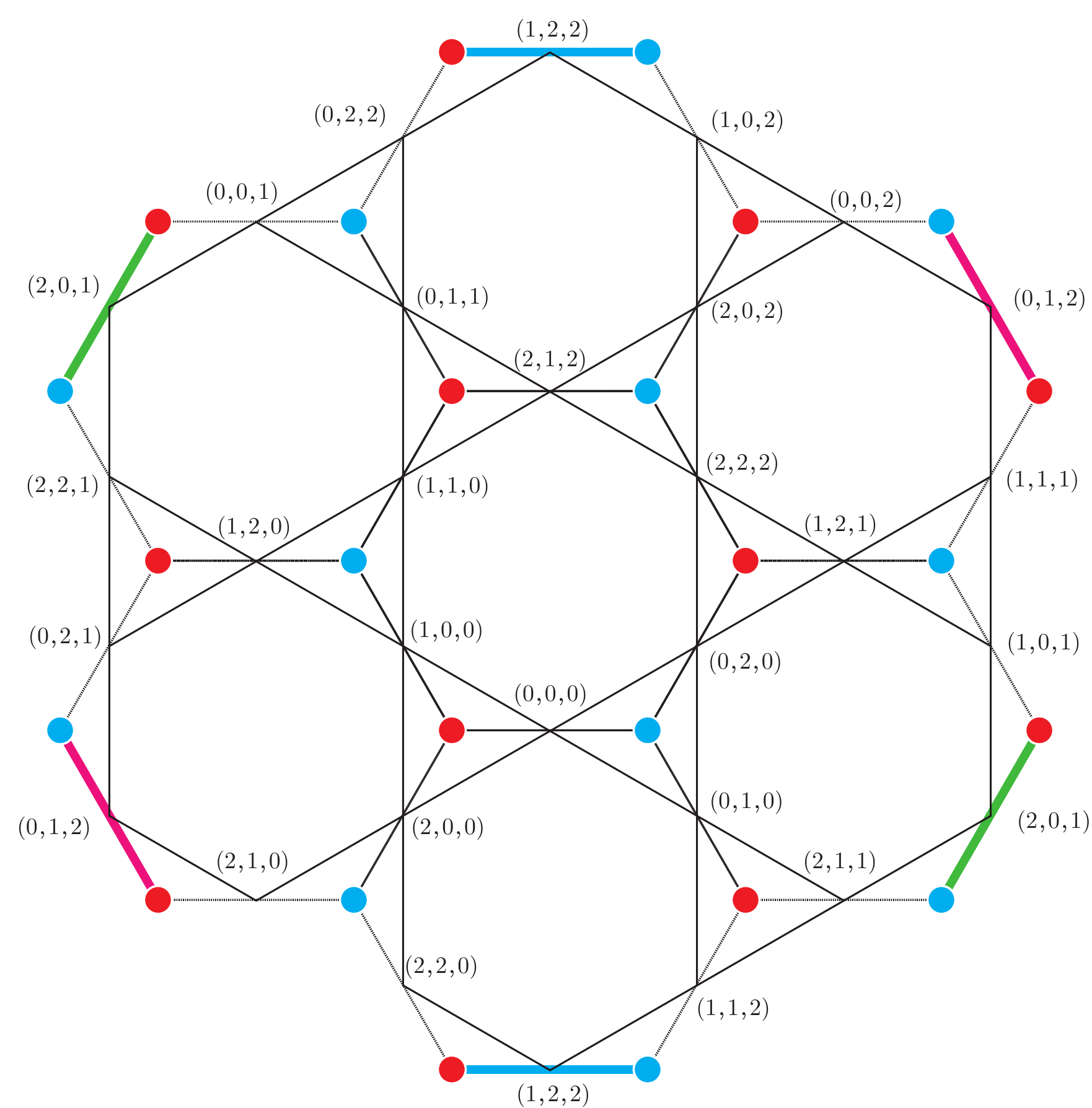,width=8cm}\caption{The coset geometry  of $B(2,3)$ overlaid by  its Cayley  graph, both embedded into   a plane domain with some boundary identifications, represented by  a pair of segments with the  same color. This is a geometric evidence of the finiteness of $B(2,3)$: there are exactly 27 edges in the coset geometry, which correspond to the 27 vertices of the Cayley graph. \label{fig2}}
\end{figure}

\end{example}

\subsubsection*{Beyond two generators}
Results of Section  \ref{Sec2} can be easily generalized to a group $G=\Span{S\mid R}$, with $|S|=m$, provided that the generating set $S$ is \emph{Borel-free}, i.e., $\underset{s\in S}{\cap}\langle s\rangle=1$, and $\Span{s}\neq 1$ for all $s\in S$. In this case, there holds the following generalization of Corollary \ref{Stat1}.
\begin{corol}
 The coset geometry $T(G,S)$ determined by the system of  $m$ subgroups $\{\Span{s}\}_{s\in S}$ is a rank $m$ incidence geometry or, equivalently, an $m$-colored graph, on which $G$ acts in a clique-regular and clique-transitive way.
\end{corol}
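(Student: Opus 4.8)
The plan is to transcribe the rank-two argument leading to Corollary \ref{Stat1}, replacing the basic edge by a \emph{basic chamber} and edge-regularity and edge-transitivity by their clique analogues. First I would fix the combinatorial setup: assign to each coset $d\langle s\rangle$ the color $s$ and declare two cosets adjacent when they meet; since two distinct cosets of the same $\langle s\rangle$ are disjoint, same-colored vertices are never adjacent, so this is a proper $m$-coloring and exhibits $T(G,S)$ as an $m$-colored graph, i.e. a rank $m$ incidence geometry with type set $S$. The basic chamber is $C_0:=\{\langle s\rangle\}_{s\in S}$: as every $\langle s\rangle$ contains $1$, its $m$ vertices are pairwise incident, so $C_0$ is a clique carrying one vertex of each color. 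I would then introduce the analogue of \eqref{eqMapB}, namely $b\colon G\to\{\text{chambers}\}$, $b(g):=gC_0=\{g\langle s\rangle\}_{s\in S}$, and note that all vertices of $b(g)$ share the element $g$.

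Next I would establish clique-regularity. Since $G$ acts by left translation it preserves colors, so any $g$ fixing a chamber must fix each of its $m$ vertices separately; for the basic chamber this gives $g\langle s\rangle=\langle s\rangle$, hence $g\in\langle s\rangle$, for every $s\in S$, so that $g\in\bigcap_{s\in S}\langle s\rangle=1$ by the Borel-free hypothesis. This is exactly where Borel-freeness plays the role that $H\cap K=1$ (Lemma \ref{lemmaStupido1}) played in rank two. Because chambers in one orbit have conjugate stabilizers, transitivity will upgrade this to triviality of every chamber stabilizer, yielding regularity.

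The heart of the matter is clique-transitivity, i.e. that every chamber lies in the orbit of $C_0$. The clean observation is that if the vertices $d_s\langle s\rangle$ of a chamber have a common element $g$, then $d_s\langle s\rangle=g\langle s\rangle$ for all $s$ and the chamber is $b(g)=gC_0$; together with the injectivity of $b$ (once more a consequence of Borel-freeness) this makes $b$ the sought $G$-equivariant bijection and the action simply transitive.

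I expect the existence of such a common element to be the main obstacle. For $m=2$ it is automatic, since the one incidence defining an edge is precisely the assertion that the two cosets meet, which is why the rank-two proof was immediate. For $m\ge 3$, however, the pairwise incidence of the $m$ cosets forming a maximal clique is a Helly-type condition that does not by itself force a global common point. I therefore expect a complete proof to rest on one of two things: either a verification, drawn from the presentation $\langle S\mid R\rangle$, that the coset-intersection pattern does enjoy this Helly property, or the explicit adoption of the standard convention whereby the chambers of a coset geometry are the flags with nonempty total intersection, under which transitivity reduces to the orbit computation above and the whole statement collapses to the regularity argument. I would make this convention explicit before claiming clique-transitivity.
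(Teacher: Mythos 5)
The paper gives no proof of this corollary at all---it is stated as an ``easy generalization'' of Corollary \ref{Stat1}---so your transcription of the rank-two argument (basic chamber $C_0=\{\Span{s}\}_{s\in S}$, the map $g\mapsto gC_0$, Borel-freeness playing the role of Lemma \ref{lemmaStupido1} in the regularity step) is exactly the intended route, and that part of your proposal is sound. The real value of your write-up is that it isolates the one step that is \emph{not} routine, and your worry there is justified: for $m\ge 3$, pairwise incidence of $m$ cosets of distinct types does not imply a common element, so clique-transitivity in the paper's literal sense of ``complete subgraph displaying all $m$ colors'' can fail. A concrete counterexample: take $G=\Z_2\times\Z_2$ with the Borel-free generating set $S=\{(1,0),(0,1),(1,1)\}$. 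The intersection graph is the octahedron $K_{2,2,2}$, which has eight $3$-cliques while $|G|=4$; the clique $\{\Span{(1,0)},\ \Span{(0,1)},\ (1,0)+\Span{(1,1)}\}$ is pairwise incident but has empty triple intersection, hence is not a translate of $C_0$, and $G$ has two orbits on cliques.

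Consequently the statement needs exactly the repair you propose: either adopt the convention that chambers are flags with nonempty total intersection (equivalently, the translates $gC_0$), under which transitivity is tautological and regularity follows from $\bigcap_{s\in S}\Span{s}=1$ precisely as you argue, or add a hypothesis guaranteeing a Helly property for the cosets of the $\Span{s}$. The paper's footnote identifying cliques with chambers suggests the authors have the first convention in mind, but they never state it, and without it the corollary as written is false. Your instinct to make the convention explicit before claiming clique-transitivity is the correct fix; the only improvement I would suggest is to record a counterexample such as the one above, which shows the caveat is not merely pedantic.
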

Notice that the notion of a \emph{clique} (which is a complete subgraph displaying all $m$ colors) has   replaced   that of an edge.\footnote{In the context of Tits geometry, the notion of a clique corresponds to that of a \emph{chamber} \cite{MR1332995}.} 
The vertex-to-edge duality discussed in Section \ref{SecDuality} becomes more complicated, even in a purely graph theoretic context, since it should be replaced by a \virg{vertex-to-clique} duality, which is a tougher concept,   yet worth further investigation. Framing it in a multi-dimensional tiling context is an even more challenging task (see, e.g, \cite{MR1232754} concerning a 3D example).

\subsubsection*{Final remarks}
We worked with the von Dyck group just because the absence of reflections makes everything easier; the passage to the full triangle group requires more care, but it can be done relying on standard techniques of double coverings.

 \bibliographystyle{amsplain}

 \bibliography{Bibliografia}

\end{document}